\documentclass[reqno, 12pt]{amsart}
\usepackage[letterpaper,hmargin=1in,vmargin=1.1in]{geometry}
\usepackage{amsmath, amssymb, amsthm, verbatim, bbm, url} 




\newcommand \N {\mathbb{N}}
\newcommand \R {\mathbb{R}}
\newcommand \C {\mathbb{C}}
\newcommand \Z {\mathbb{Z}}

\newcommand \Oh {\mathcal{O}}

\newcommand \D {\partial}
\newcommand \eps {\varepsilon}

\DeclareMathOperator \im {Im}

\DeclareMathOperator \sgn {sgn}
\DeclareMathOperator \Res {Res}

\DeclareMathOperator \Tr {Tr}

\DeclareMathOperator \card{card}
\DeclareMathOperator \vol {vol}

\newtheorem{lem}{Lemma}
\newtheorem*{thm}{Theorem}

\theoremstyle{definition}

\parindent 0 mm
\parskip 5 mm
\numberwithin{equation}{section}
\numberwithin{lem}{section}
\numberwithin{Defn}{section}

\title
{Resonant uniqueness of radial semiclassical Schr\"odinger operators}

\author[Kiril Datchev]
{Kiril Datchev}
\author[Hamid Hezari]
{Hamid Hezari}
\address{Mathematics Department, Massachusetts Institute of Technology, Cambridge, MA
02139.}
\email{datchev@math.mit.edu}
\email{hezari@math.mit.edu}
\keywords{inverse scattering theory, trace invariants, semiclassical Schr\"odinger operators}
\thanks{The first author is partially supported by a National Science Foundation postdoctoral fellowship, and the second author is partially supported by the National Science Foundation under
grant DMS-0969745. The authors are grateful for the hospitality of the Mathematical Sciences Research Institute.}
\date{October 17, 2011}

\begin{document}

\begin{abstract}
We prove that radial, monotonic, superexponentially decaying potentials in $C^\infty(\R^n)$, $n \ge 1$ odd, are determined by the resonances of the associated semiclassical Schr\"odinger operator among all superexponentially decaying potentials in $C^\infty(\R^n)$.
\end{abstract}

\maketitle

\section{Introduction}

Given $V\in C^\infty(\R^n;\R)$, the semiclassical inverse spectral problem asks: what information about $V$ can be recovered from the asymptotics of the spectrum of $-h^2\Delta + V$ as $h \to 0$? In the case when the spectrum is discrete, various positive results have been given since the papers of Sj\"ostrand \cite{sjo} and Iantchenko-Sj\"ostrand-Zworski \cite{ISZ}, including many sufficient conditions under which the potential can be determined, beginning with the work of Guillemin-Uribe \cite{gu07} (see also \cite{h09,cg,c,gw10,dhv}).

In this paper we consider potentials $V \in C^\infty(\R^n; [0,\infty))$ satisfying
\begin{equation}\label{e:ass}
|V(x)| \le A \exp(-B |x|^{1+\eps}), \qquad |\D^\alpha V(x)| \le C_\alpha,
\end{equation}
for every multiindex $\alpha$ and for some $\eps,A,B,C_\alpha > 0$.
For such potentials the spectrum of $-h^2\Delta + V$ is continuous and equals $[0,\infty)$, and hence contains no (further) information about $V$.  In this setting \textit{resonances} replace the discrete data of eigenvalues. For $n$ odd they are defined as the poles of the meromorphic continuation of $R_V(\lambda) = (-h^2\Delta + V - \lambda^2)^{-1} \colon L^2_{\textrm{comp}} \to L^2_{\textrm{loc}}$ from $\{\im \lambda > 0\}$ to $\C$.

We prove that potentials which are radial and monotonic are determined by their resonances among all potentials satisfying \eqref{e:ass}.

\begin{thm}
Let $n \ge 1$ be odd, and let $V_0,V \in C^\infty(\R^n;[0,\infty))$ satisfy \eqref{e:ass}. Suppose further that $V_0(x) = R(|x|)$, and $R'(r)$ vanishes only at $r=0$ and whenever $R(r) = 0$. Suppose that the resonances of $-h^2\Delta + V_0(x)$ agree with the resonances of $-h^2 \Delta + V(x)$, up to $o(h^2)$ uniformly on compact sets, for $h \in \{h_j\}_{j=1}^\infty$ for some sequence $h_j \to 0$. Then there exists $x_0 \in \R^n$ such that $V(x) = V_0(x-x_0)$.
\end{thm}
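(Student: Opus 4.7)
My approach proceeds in three stages: extract finitely many semiclassical trace invariants from the resonance data, identify these invariants with integrals of powers of $V$ and of $|\nabla V|^2$, and apply symmetric rearrangement theory to conclude that $V$ is a translate of $V_0$.

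For the first stage, let $f \in C_c^\infty(\R)$. Using a resonance-based trace formula in the spirit of Sj\"ostrand--Zworski, I would express
$$D(h,f;V) := \Tr\bigl(f(-h^2\Delta+V) - f(-h^2\Delta)\bigr)$$
as a weighted sum over resonances plus controlled error. Together with an a priori polynomial upper bound on the number of resonances in any compact subset of $\C$, the hypothesis $|\lambda_j(V;h) - \lambda_j(V_0;h)| = o(h^2)$ forces $D(h,f;V) - D(h,f;V_0) = o(h^{2-n})$ along $h = h_j$. Semiclassical symbolic calculus then gives the expansion
$$D(h,f;V) = h^{-n}\bigl(I_0[V;f] + h^2 I_2[V;f]\bigr) + o(h^{2-n})$$
(odd powers of $h$ vanish by parity in odd dimension), so for every admissible $f$ one obtains $I_0[V;f] = I_0[V_0;f]$ and $I_2[V;f] = I_2[V_0;f]$.

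The leading invariant has the explicit Weyl form
$$I_0[V;f] = (2\pi)^{-n} \iint_{\R^n\times\R^n} \bigl[f(|\xi|^2+V(x)) - f(|\xi|^2)\bigr]\,d\xi\,dx = \int_{\R^n} G_f(V(x))\,dx,$$
with $G_f$ a universal linear functional of $f$. Varying $f$ (and using that $V,V_0 \ge 0$ are bounded) recovers $\int V^k\,dx$ for every $k \ge 1$, so by the Hausdorff moment problem $V$ and $V_0$ have the same distribution function, giving $V^* = V_0^* = V_0$, where $^*$ denotes the symmetric decreasing rearrangement and the last equality uses the strict monotonicity hypothesis on $R$. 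The subleading invariant $I_2[V;f]$ is a phase-space integral of a polynomial in $V, \nabla V, \Delta V$ weighted by derivatives of $f$ at the classical energy $|\xi|^2+V(x)$; combining its equality with the moment identities just obtained extracts $\int |\nabla V|^2\,dx = \int |\nabla V_0|^2\,dx$. The P\'olya--Szeg\H{o} inequality $\int|\nabla V^*|^2 \le \int|\nabla V|^2$ then holds with equality, and the strict monotonicity of $V_0 = V^*$ on its support rules out plateaus of positive measure, so the equality case of Brothers--Ziemer yields $V(x) = V_0(x-x_0)$ for some $x_0 \in \R^n$.

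The principal difficulty will be the first stage: producing a resonance-based semiclassical trace formula sharp enough that $o(h^2)$ control of individual resonances on compact sets pins down both $I_0$ and $I_2$, not just the leading Weyl term. This requires careful handling of contributions from resonances near the boundary of $\supp f$, the use of a sufficiently rich family of test functions to probe all moments via $I_0$, and combinatorial care to disentangle the gradient term in $I_2$ from the other polynomial contributions in $V$ and its derivatives.
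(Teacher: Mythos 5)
Your overall architecture (resonance data $\to$ trace formula $\to$ Weyl-type invariants $\to$ moments of $V$ and of $|\nabla V|^2$) matches the paper's through the first two stages, but your endgame is genuinely different. The paper extracts the full families $\int V^k\,dx$ and $\int V^k|\nabla V|^2\,dx$ for $k\ge n$, passes via the coarea formula to the level-set invariants $\int_{\{V=s\}}|\nabla V|^{-1}dS$ and $\int_{\{V=s\}}|\nabla V|\,dS$, and then combines Cauchy--Schwarz with the isoperimetric inequality to show that almost every level set of $V$ is a sphere on which $|\nabla V|$ is constant, finishing by solving a first-order equation along gradient flowlines. You instead use only the equality of distribution functions together with the single number $\int|\nabla V|^2dx$, and invoke P\'olya--Szeg\H{o} plus the Brothers--Ziemer rigidity theorem. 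Your route outsources the rigidity to a stronger black box (the paper's argument is in effect a self-contained smooth-category proof of exactly the rigidity it needs, powered by the extra invariants $\int V^k|\nabla V|^2dx$), and your verification of the no-plateau hypothesis for Brothers--Ziemer is the right one, since $R'\ne 0$ on $\{0<R<\max R\}$ away from the origin forces $V^*=V_0$ to have no flat spots of positive measure.

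Two caveats in your first two stages need repair. First, for odd $n\ge 3$ the Poisson formula for resonances holds only as a distribution on $\R\setminus\{0\}$, so the admissible test functions are $g$ with $\hat g\in C_0^\infty(\R\setminus 0)$ even and $f(\tau^2)=g(\tau)$; you cannot take arbitrary $f\in C_c^\infty(\R)$, and correspondingly you cannot recover $\int V^k\,dx$ for every $k\ge1$: the weights $\int f^{(k)}(|\xi|^2)d\xi$ can be made nonzero only for $k\ge n$ (this is the content of the paper's Lemma). This does not sink your argument, because the moments of order $\ge n$ of the compactly supported measure $V_*dx$ determine it up to a distribution of order $<n$ supported at $0$, which must be $c_0\delta_0$ since $V_*dx$ is a measure; hence the distribution functions still agree on $(0,\infty)$ and $V^*=V_0$. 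Second, the same restriction affects the subleading invariant: you obtain $\int V^k|\nabla V|^2dx$ only for $k$ large, not the $k=0$ moment $\int|\nabla V|^2dx$ directly. To close this gap, observe that the pushforward measure $V_*(|\nabla V|^2dx)$ has no atom at $s=0$, because $\nabla V=0$ at every point of $\{V=0\}$ (these are global minima of $V\ge0$); therefore the high moments determine the total mass $\int|\nabla V|^2dx$ after all. With these two repairs your argument is correct.
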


The strong symmetry assumption on $V_0$ is crucial to our argument. To our knowledge, such symmetry assumptions are always needed for uniqueness results for inverse resonance problems (and also for inverse spectral problems, see the introduction of \cite{dhv} for a discussion). The strongest previous results are those for the nonsemiclassical Schr\"odinger problem when $n=1$. In \cite{Zworski:isopolar} Zworski proves that a compactly supported even potential $V$ is determined from the resonances of $-\frac {d^2}{dx^2} + V$, and in \cite{K} Korotyaev shows that a potential which is not necessarily even is determined by some additional scattering data. In the present paper we study the semiclassical problem for general odd dimensions, for not necessarily compactly supported potentials, and assume a priori only that $V_0$ and not necessarily $V$ is radial, and we use only resonances to determine $V$.

Analogous results hold in the case of obstacle scattering. Hassell and Zworski \cite{hz99} show that a ball is determined by its Dirichlet resonances among all compact obstacles in $\R^3$. Christiansen \cite{chr} extends this result to multiple balls, to higher odd dimensions, and to Neumann resonances. As in the present paper, the proofs use two trace invariants and isoperimetric-type inequalities, although the invariants and inequalities are different here. The case of an analytic obstacle with two mutually symmetric connected components is treated by Zelditch \cite{z04} by using the singularities of the wave trace generated by the bouncing ball between the two components. There is also a large literature of inverse scattering results where data other than the resonances are used: see for example \cite{melbook}.

Our proof is based on recovering and analyzing the first two integral invariants of the Helffer-Robert semiclassical trace formula (\cite[Theorem 1.1]{robert}, see also \cite[Proposition 5.3]{hr} and \cite[\S 10.5]{gs}):
\begin{align}
\Tr(f(-h^2 \Delta &+ V) - f(-h^2\Delta)) = \label{e:hsres}\\\nonumber\frac 1 {(2\pi h)^n} &\left(\int_{\R^{2n}} f(|\xi|^2 + V) - f(|\xi|^2)dxd\xi + \frac{h^2}{12} \int_{\R^{2n}} |\nabla V|^2 f^{(3)}(|\xi|^2 + V)dxd\xi + \Oh(h^4)\right),
\end{align}
valid for $f \in \mathcal{S}(\R)$. This is analogous to the approach taken by Colin de Verdi\`ere \cite{c}, by Guillemin-Wang \cite{gw10}, and by Ventura and the authors \cite{dhv} for the problem of recovering the potential from the discrete spectrum. In \S\ref{s:trace} we sketch a proof of \eqref{e:hsres} for the reader's convenience.

To express the left hand side of \eqref{e:hsres} in terms of the resonances of $-h^2\Delta + V$, we use Melrose's Poisson formula (\cite{Melrose:Trace}), an extension of the formula of Bardos-Guillot-Ralston (\cite{bgr}), adapted to $V$ satisfying \eqref{e:ass} by S\'a Barreto-Zworski (\cite{SabZwo, SabZwo2}):
\begin{equation}\label{e:melrose}
2\Tr\left(\cos(t\sqrt{-h^2 \Delta + V}) - \cos(t\sqrt{-h^2 \Delta})\right) = \sum_{\lambda \in \Res} e^{-i|t|\lambda}, \qquad t \ne 0,
\end{equation}
where $\Res$ denotes the set of resonances of $-h^2\Delta + V$, included according to multiplicity, and the equality is in the sense of distributions on $\R \setminus 0$. 
In fact, it is because \eqref{e:melrose} is known to hold only under the decay assumption \eqref{e:ass} that we make that assumption.
When $n=1$ a stronger trace formula, valid for all $t \in \R$, is known: see for example \cite[page 3]{Zworski:XEDP}. When $n$ is even, the meromorphic continuation of the resolvent is not to $\C$ but to the Riemann surface of the logarithm, and as a result Poisson formul\ae ~ for resonances are more complicated and contain error terms which we have not been able to treat.

If one replaces the assumption that the resonances of the two potentials agree with an assumption that the scattering phases agree, it should be possible to weaken the decay condition \eqref{e:ass} (allowing longer-range potentials) and to consider even dimensions as well as odd dimensions. In this case one would use the Birman-Krein formula (see e.g. \cite[\S 4.1]{melbook} and \cite[Chapter 8]{yafaev}) in place of \eqref{e:melrose} to express the trace of the wave group in terms of the scattering phase. The key point is that the Birman-Krein formula holds more generally than \eqref{e:melrose}, and the proof would otherwise be similar to the proof given below, but the assumption that the two potentials have the same scattering phase is stronger than the assumption that they have the same resonances.

We are grateful to Maciej Zworski for his interest in this project and for helpful discussions regarding Poisson formul\ae~for resonances. Thanks also to the anonymous referee for suggesting that we add an expanded discussion of \eqref{e:hsres}.

\section{Proof of the theorem}\label{s:intinv}

From \eqref{e:melrose}, it follows that if 
\begin{equation}\label{e:moments}\hat g \in C_0^\infty(\R \setminus 0)  \textrm{ is even,}\end{equation}
 then
\begin{equation}\label{e:melrose2}
\Tr(g(\sqrt{-h^2 \Delta + V}) - g(\sqrt{-h^2\Delta})) = \frac 1 {4\pi}\sum_{\lambda \in \Res} \int_\R e^{-i|t|\lambda}\hat g(t)dt.
\end{equation}

 Now setting the right hand sides of \eqref{e:melrose2} and \eqref{e:hsres} equal and taking $h \to 0$, we find that
\begin{equation}\label{e:traceinv}
\int_{\R^{2n}}  f(|\xi|^2 + V) - f(|\xi|^2)dxd\xi, \qquad  \int_{\R^{2n}} |\nabla V|^2 f^{(3)} (|\xi|^2 + V)dxd\xi
\end{equation}
are resonant invariants (i.e. are determined by knowledge of the resonances up to $o(h^2)$) provided that $f(\tau^2) = g(\tau)$ for all $\tau$ and for some $g$ as in \eqref{e:moments}. Taylor expanding, we write the first invariant as
\[
\sum_{k=1}^m \frac 1 {k!} \int _{\R^n} f^{(k)}(|\xi|^2)d\xi \int_{\R^n} V(x)^k dx + \int_{\R^{2n}}\frac{V(x)^{m+1}} {m!} \int_0^1(1-t)^mf^{(m+1)}(|\xi|^2 + tV(x))dtdxd\xi.
\]
Replacing $f$ by $f_\lambda$, where $f_\lambda(\tau) = f(\tau/\lambda)$ (note that  $g_\lambda(\tau) = f_\lambda(\tau^2)$ satisfies \eqref{e:moments}) gives
\[
\sum_{k=1}^m \lambda^{n/2-k}\frac 1 {k!} \int _{\R^n} f^{(k)}(|\xi|^2)d\xi \int_{\R^n} V(x)^k dx + \Oh(\lambda^{n/2-m-1})
\]
Taking $\lambda \to \infty$ and $m \to \infty$ we obtain the invariants
\[
\int _{\R^n} f^{(k)}(|\xi|^2)d\xi \int_{\R^n} V(x)^k dx,
\]
for every $k \ge 1$.
\begin{lem}
There exists $g$ satisfying \eqref{e:moments} such that if $f(\tau^2) = g(\tau)$, then
\[
\int _{\R^n} f^{(k)}(|\xi|^2)d\xi \ne 0,
\]
provided $k \ge n$.
\end{lem}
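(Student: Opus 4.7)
The plan is to reduce the integral to a one-dimensional Fourier problem via integration by parts, and then resolve that problem by choosing $\hat{g}$ to be a nonnegative bump and tracking signs through a recursion. First, I would pass to spherical coordinates and substitute $s=r^{2}$ to express the integral as a positive multiple of
\[
\int_{0}^{\infty}f^{(k)}(s)\,s^{p-1/2}\,ds,\qquad p:=(n-1)/2.
\]
Integrating by parts $p$ times -- valid because at each stage $q<p$ the surviving exponent $p-\tfrac12-q$ is strictly positive so the boundary term at $s=0$ vanishes, while decay at $\infty$ is automatic -- converts this to a nonzero multiple of $\int_{0}^{\infty}f^{(k-p)}(s)\,s^{-1/2}\,ds$. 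Re-substituting $s=\tau^{2}$ rewrites the result as $\int_{\R}H_{k-p}(\tau)\,d\tau = \widehat{H_{k-p}}(0)$, where $H_{j}(\tau):=f^{(j)}(\tau^{2})$ is even and Schwartz. Since $k\ge n=2p+1$ forces $k-p\ge p+1\ge 1$, it suffices to produce one $g$ satisfying \eqref{e:moments} with $\widehat{H_{j}}(0)\ne 0$ for every $j\ge 1$.

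Next, I would derive a Fourier-side recursion. The chain rule identity $\frac{d}{d\tau}f^{(j-1)}(\tau^{2})=2\tau f^{(j)}(\tau^{2})$ shows that $H_{j}=\mathcal{D}^{j}g$, where $\mathcal{D}:=\frac{1}{2\tau}\frac{d}{d\tau}$ maps even Schwartz functions to themselves (for even Schwartz $u$, $u'(0)=0$, so $u'/\tau$ is smooth at $0$). Writing $u:=H_{j}'/\tau\in\Sc$, the identity $\tau u=H_{j}'$ Fourier-transforms to $\hat{u}\,'(\rho)=\rho\,\widehat{H_{j}}(\rho)$; integrating and using $\hat{u}(-\infty)=0$ (valid since $u\in\Sc$) gives
\[
\widehat{H_{j+1}}(\rho)\;=\;\tfrac{1}{2}\int_{-\infty}^{\rho}\sigma\,\widehat{H_{j}}(\sigma)\,d\sigma,\qquad \widehat{H_{0}}=\hat{g}.
\]
By induction each $\widehat{H_{j}}$ is even and supported inside $\supp\hat{g}$.

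Finally, I would choose $\hat{g}\in C_{0}^{\infty}(\R\setminus 0)$ even with $\hat{g}\ge 0$, $\supp\hat{g}=[-b,-a]\cup[a,b]$ for some $0<a<b$, and $\hat{g}>0$ on $(a,b)$. Setting $\psi_{j}:=\widehat{H_{j}}|_{[0,\infty)}$, the evenness of $\widehat{H_{j}}$ together with $\psi_{j}(\infty)=0$ rewrites the recursion as $\psi_{j}(\rho)=-\tfrac12\int_{\rho}^{\infty}\sigma\,\psi_{j-1}(\sigma)\,d\sigma$. Induction then establishes $(-1)^{j}\psi_{j}>0$ on $[0,b)$ and $\psi_{j}\equiv 0$ on $[b,\infty)$ for every $j\ge 1$: given the claim for $j-1$, the integrand $\sigma\,\psi_{j-1}(\sigma)$ has sign $(-1)^{j-1}$ on $(0,b)$ and vanishes on $[b,\infty)$, so its integral from $\rho$ to $\infty$ has sign $(-1)^{j-1}$ for $\rho\in[0,b)$, and $\psi_{j}$ has sign $(-1)^{j}$ there. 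In particular $\widehat{H_{j}}(0)=\psi_{j}(0)\ne 0$ for all $j\ge 1$, which finishes the proof. The hard part will be Step 2: carefully verifying that $\mathcal{D}$ preserves even Schwartz functions (so $u\in\Sc$ and the constant of integration at $-\infty$ is indeed zero); the other two steps are routine, and it is the oddness of $n$ -- making the initial exponent $p-\tfrac12$ a positive half-integer -- that makes the $p$-fold integration by parts work.
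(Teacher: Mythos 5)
Your proof is correct and reaches the paper's conclusion by a genuinely different route. Both arguments begin with polar coordinates, integrate by parts down to a critical exponent, and finish by choosing $\hat g \ge 0$ supported away from $0$; the mechanisms in between differ. The paper works in the variable $\tau = |\xi|$, expands $f^{(k)}(\tau^2)$ as $\sum_j c_j g^{(j)}(\tau)\tau^{j-2k}$, integrates by parts to reach $A\int_0^\infty g^{(2k-n)}(\tau)\tau^{-1}d\tau$ (checking that the negative powers of $\eps$ in the boundary terms cancel), verifies $A\ne 0$ by testing the identity on the Gaussian, and evaluates the surviving integral by Plancherel as a positive multiple of $\int |t|^{2k-n}\hat g(t)\,dt$. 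You instead integrate by parts in $s=|\xi|^2$, where each step contributes a manifestly nonzero half-integer factor (so no Gaussian normalization and no $\eps$-cancellation bookkeeping are needed), reduce to $\widehat{H_{k-p}}(0)$ with $H_j(\tau)=f^{(j)}(\tau^2)$, and replace the Plancherel computation by the Fourier-side recursion $\widehat{H_{j+1}}(\rho)=\tfrac12\int_{-\infty}^{\rho}\sigma\widehat{H_j}(\sigma)\,d\sigma$ plus a sign induction. Your version is more self-contained and makes the nonvanishing of every constant transparent; the paper's version buys a cleaner closed-form answer, namely a single weighted moment of $\hat g$.

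Two small patches are needed, neither affecting correctness. First, the intermediate claim that each $\widehat{H_j}$ is supported inside $\supp\hat g$ is false for $j\ge 1$: for instance $\widehat{H_1}$ equals the nonzero constant $\tfrac12\int_{-b}^{-a}\sigma\hat g(\sigma)\,d\sigma$ on $(-a,a)$. What is true, and what your final induction actually proves and uses, is only that $\supp\widehat{H_j}\subseteq[-b,b]$. Second, the base case $j=1$ of the sign induction does not literally follow from the stated inductive step, since $\psi_0=\hat g$ is merely nonnegative, not positive, on $(0,a]$; but $\int_\rho^\infty\sigma\hat g(\sigma)\,d\sigma>0$ for every $\rho\in[0,b)$ because $(\max(\rho,a),b)$ is a nonempty interval on which $\hat g>0$, so $\psi_1<0$ on $[0,b)$ as required and the induction proceeds.
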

\begin{proof}
Passing to polar coordinates, and writing $f^{(k)}(\tau^2) = \sum_{j=1}^k c_j  g^{(j)}(\tau) \tau^{j-2k}$, we obtain
\[
\int _{\R^n} f^{(k)}(|\xi|^2)d\xi = \lim_{\eps \to 0^+} \sum_{j=1}^k c_j \int _\eps^\infty g^{(j)}(\tau)\tau^{j-2k+n-1}d\tau.
\]
We next integrate each integral by parts $2k-j-n$ times to obtain
\begin{equation}\label{e:evenid}
\int _{\R^n} f^{(k)}(|\xi|^2)d\xi  =  A \int_0^\infty g^{(2k-n)}(\tau) \tau^{-1}d\tau + B g^{(2k-n)}(0) = A \int_0^\infty g^{(2k-n)}(\tau) \tau^{-1}d\tau
\end{equation}
for some constants $A,B$. Note that all negative powers of $\eps$ in the boundary terms must cancel when summed in $j$, since the left hand side is a finite integral, and that $g^{(2k-n)}(0) = 0$ since $2k-n$ is odd. To prove that $A \ne 0$, we observe that the identity \eqref{e:evenid} holds for $f(\tau) = e^{-\tau}$, $g(\tau) = e^{-\tau^2}$, and that in that case $\int f^{(k)}(|\xi|^2)d\xi = (-1)^k \pi^{n/2}$.
Now,
\[2 \int_0^\infty g^{(2k-n)}(\tau) \tau^{-1}d\tau = \int_{-\infty}^\infty g^{(2k-n)}(\tau) \tau^{-1}d\tau = \frac { i^{2k-n+1}}2 \int_{-\infty}^\infty t^{2k-n}\hat g(t) \sgn t dt,\]
where we used the oddness of $g^{(2k-n)}$ followed by Plancherel's theorem. To make the final expression nonzero it suffices to take $g$ such that $\hat g$ is nonnegative and not identically $0$.
\end{proof}

This shows that 
\begin{equation}
\label{e:vk}\int_{\R^{n}} V(x)^k dx = \int_{\R^{n}} V_0(x)^k dx
\end{equation}
for every $k \ge n$, and a similar analysis of the second invariant of \eqref{e:traceinv} proves that
\begin{equation}\label{e:vk2}
\int_{\R^{n}} V(x)^k |\nabla V(x)|^2 dx = \int_{\R^{n}} V_0(x)^k |\nabla V_0(x)|^2 dx
\end{equation}
for every $k \ge n$.

We rewrite the invariant \eqref{e:vk} using $V_*dx$, the pushforward of Lebesgue measure by $V$, as
\begin{equation}\label{e:deltasum}
\int_{\R^{n}} V(x)^k dx = \int_\R s^k(V_*dx)_s = i^{k} \widehat{V_*dx}^{(k)}(0).
\end{equation}
Since $V$ and $V_0$ are both bounded functions, the pushforward measures are compactly supported and hence have entire Fourier transforms, and we conclude that
\[
V_*dx = {V_0}_*dx + \sum_{k=0}^{n-1} c_k \delta^{(k)}_0= {V_0}_*dx +  c_0 \delta_0.
\]
For the first equality we used the invariants \eqref{e:deltasum}, and for the second the fact that $V_*dx$ is a measure.
In other words
\begin{equation}\label{e:vols}
\vol(\{V > \lambda\}) = \vol(\{V_0 > \lambda\})
\end{equation}
whenever $\lambda > 0$. Moreover, this shows that $V_*dx$ is absolutely continuous on $(0,\infty)$, and so by Sard's lemma the critical set of $V$ is Lebesgue-null on $V^{-1}((0,\infty))$. As a result we may  use the coarea formula\footnote{If $n=1$ we put $\int_{\{V=s\}} |\nabla V|^{-1} dS =\sum_{x \in V^{-1}(s)} |V'(x)|^{-1}.$}  to write
\[
V_*dx = \int_{\{V=s\}} |\nabla V|^{-1} dS ds, \qquad \textrm{on }(0,\infty)
\]
and to conclude that 
\begin{equation}\label{e:inv1}
 \int_{\{V=s\}} |\nabla V|^{-1} dS = \int_{\{V_0=s\}} |\nabla V_0|^{-1} dS
 \end{equation}
for almost every $s>0$.
Similarly, rewriting the invariants \eqref{e:vk2} as
\[
\int_{\R^{n}} V(x)^k |\nabla V(x)|^2 dx = \int_\R s^k \int_{\{V=s\}} |\nabla V|dSds, 
\]
we find that
\begin{equation}\label{e:inv2}
 \int_{\{V=s\}} |\nabla V|dS =  \int_{\{V_0=s\}} |\nabla V_0|dS, \qquad s>0.
\end{equation}

From the Cauchy-Schwarz inequality, \eqref{e:inv1} and \eqref{e:inv2} we find that
\begin{equation}\label{e:cs}
\left(\int_{\{V=s\}}1 dS\right)^2 \le \int_{\{V=s\}} |\nabla V|^{-1}dS\int_{\{V=s\}} |\nabla V|dS  = \left(\int_{\{V_0=s\}} 1dS\right)^2,
\end{equation}
for almost every $s>0$,
where for the last equality we used the fact that $\nabla V_0 = R'$ is constant on level sets of $V_0$. By assumption these level sets $\{V_0=s\}$ are spheres, and by \eqref{e:vols} the volumes of their interiors equal those of $\{V=s\}$.
Hence by the isoperimetric inequality\footnote{If $n=1$ the `isoperimetric inequality' states that if an open set $U$ has the same measure as an interval $I$, then $\card \D U \ge \card \D I$, with equality if and only if $U$ is an interval.} the level sets $\{V=s\}$ for almost every $s$ are spheres also, and furthermore equality is attained in \eqref{e:cs}. Consequently, from the Cauchy-Schwarz equality we conclude that $|\nabla V|$ and $|\nabla V|^{-1}$ are proportional on these level sets $\{V=s\}$, with
\begin{equation}\label{e:firstorderpde}|\nabla V(x)|^2 = R'(R^{-1}(V(x))).\end{equation}
Because by assumption the right hand side does not vanish for $x$ such that $V(x) \in (0,\max V_0)$, we may conclude that the same is true of the left hand side and that \eqref{e:firstorderpde} holds for all $x \in V^{-1}((0,\max V_0)).$ Solving \eqref{e:firstorderpde} along gradient flowlines as in \cite[\S3]{dhv} gives the conclusion.

\section{The semiclassical trace formula}\label{s:trace}
In this section we  sketch a proof of \eqref{e:hsres} for the reader's convenience.
Let 
\[
P_0 = -h^2\Delta + 1 + |\min V|, \qquad P = P_0 + V,
\]
where 
$|\D^\alpha V(x)| \le C_\alpha(1 + |x|^2)^{-\eps - n/2}$ for any multiindex $\alpha$, for some $C_\alpha,\eps > 0$. We prove that if $f$ is Schwartz on $\R$ then $f(P) - f(P_0)$ is trace class and \eqref{e:hsres} holds.

By \cite[Theorem 8.7]{ds}\footnote{In \cite{ds} the result is stated for $f \in C_0^\infty(\R)$. The same proof gives the result for $f$ Schwartz if one defines an almost analytic extension of $f$ which is Schwartz on $\C$, for example as in \cite[(3.1.22)]{ez}. See also \cite[Theorem 13.9]{ez} for a similar result.} 
it follows that $f(P) - f(P_0)$  is a semiclassical pseudodifferential operator with order function $(1 + |\xi|^2)^{-M}$ for any $M \in \N$, that is to say it is the semiclassical quantization of a symbol function $\sigma(f(P) - f(P_0))$ satisfying 
\[
|\D_x^\alpha \D_\xi^\beta \sigma(f(P) - f(P_0))(x,\xi)| \le C_{\alpha,\beta,M} (1 + |\xi|^2)^{-M},
\]
for any multiindices $\alpha,\beta$ and any $M \in \N$ (see e.g. \cite[Chapter 7]{ds} and \cite[Chapter 4]{ez} for background on pseudodifferential operators).

Since the integral kernel of $f(P) - f(P_0)$ is then continuous, once we show that  $f(P) - f(P_0)$ is trace class it will follow that the trace is given by the integral of this kernel along the diagonal (see e.g \cite[Supplement 1, Theorem 4.1]{bs}), namely that
\[
\Tr (f(P) - f(P_0)) = \frac 1 {(2 \pi h)^n} \int_{\R^{2n}}  \sigma(f(P) - f(P_0))(x,\xi) dxd\xi.
\]
The formula \eqref{e:hsres} then follows from a computation of the asymptotics of $\sigma(f(P) - f(P_0))(x,\xi)$ as $h \to 0$. This can be found in \cite[Proposition 5.3]{hr}, in \cite[Chapter 8]{ds}, and in \cite[\S10.5]{gs} (the formulas for the coefficients in \eqref{e:hsres} come from \cite{gs}).

It remains to show that $f(P) - f(P_0)$ is trace class.
Using Fourier inversion and Duhamel's formula we write
\[
f(P) - f(P_0) = \frac 1 {2\pi} \int \hat f(t) (e^{itP} - e^{itP_0})dt=  \frac i {2\pi} \int \hat f(t) e^{itP_0}\int_0^t e^{-isP_0}Ve^{isP} dsdt.
\]
Using $e^{itP_0} = -i P_0^{-1} \D_t e^{itP_0}$ and integrating by parts gives
\begin{align}
f(P) - f(P_0) &= -\frac 1 {2\pi} \int \hat f(t) P_0^{-1} \left(\D_t e^{itP_0}\right)\int_0^t e^{-isP_0}Ve^{isP}dsdt \nonumber\\
\label{e:tr1}&= \frac 1 {2\pi} \int  \hat f(t) P_0^{-1} Ve^{itP}dt + \frac 1 {2\pi}\int \hat f'(t)   e^{itP_0} \int_0^t e^{-isP_0}P_0^{-1}Ve^{isP}ds dt.
\end{align}
The first term of \eqref{e:tr1} can be written
\[
 \frac 1 {2\pi} \int  \hat f(t) P_0^{-1} Ve^{itP}dt  = P_0^{-1}  V f(P).
\]
But $P_0^{-1} V f(P)$ is a pseudodifferential operator with order function $(1 + |x|^2)^{-\eps - n/2}(1 + |\xi|^2)^{-M}$ for any $M \in \N$ because the order function of a composition is the product of the order functions (see e.g. \cite[Chapter 7]{ds} or \cite[Theorem 4.18]{ez}), and since $(1 + |x|^2)^{-\eps - n/2}(1 + |\xi|^2)^{-M} \in L^1(\R^{2n})$ for $M$ large enough it follows that $P_0^{-1} V f(P)$ is trace class by \cite[Theorem 9.4]{ds}.

Since $P_0^{-1} V$ has order function $(1 + |x|^2)^{-\eps - n/2}(1 + |\xi|^2)^{-1}$, it is trace class for $n=1$, and since Schr\"odinger propagators are unitary on $L^2(\R^n)$, we conclude that the second term of \eqref{e:tr1} is also trace class for $n=1$, so that $f(P) - f(P_0)$ is trace class.

If $n \ge 2$, the same computation allows us to write the second term of \eqref{e:tr1} as
\begin{align}
\frac 1{2\pi}\int  \hat f'(t)  & e^{itP_0} \int_0^t e^{-isP_0}P_0^{-1}Ve^{isP}ds dt = \nonumber\\
\label{e:tr2}&- \frac i {2\pi} \int\hat f'(t) P_0^{-2} Ve^{itP}dt - \frac i {2\pi} \int\hat f''(t)   e^{itP_0} \int_0^t e^{-isP_0}P_0^{-2}Ve^{isP}dsdt.
\end{align}
The first term of \eqref{e:tr2} can be analyzed in the same manner as the first term of \eqref{e:tr1}, and is trace class in any dimension. The second term of \eqref{e:tr2} is trace class when $n \le 3$, and the case $n \ge 4$ can be treated by iterating this procedure $\lfloor (n+1)/2\rfloor$ times.

\end{document}